\documentclass[12pt, reqno]{amsart}
\usepackage[top=3cm, bottom=3.5cm, left=3cm, right=3cm]{geometry}
\usepackage{amsmath,amsfonts,amssymb,amsthm}
\usepackage{mathtools}
\usepackage[utf8]{inputenc}
\usepackage{hyperref}
\usepackage{mathrsfs}
\usepackage{tikz-cd}
\usepackage{lipsum}
\usepackage{csquotes}
\usepackage{hyperref}

\def\R{\operatorname{\mathbb{R}}}

\usepackage{geometry}
\usepackage{booktabs}
\usepackage{array}
\usepackage{paralist}
\usepackage{verbatim}
\usepackage{subfig}
\usepackage{fancyhdr}
\author{Skilyn Leon}
\address{Department of Mathematics\\
  CUNY City College\\
  New York, NY 10031, USA}
\email[S. Leon]{skilyn.leon@mail.citytech.cuny.edu}
\author{Rosa Pavlak}
\address{Department of Mathematics\\
  CUNY City Tech\\
  New York, NY 11201, USA}
\email[R. Pavlak.]{rosa.pavlak@mail.citytech.cuny.edu}
\author{Evelyn Pulla}
\address{Department of Mathematics\\
  CUNY City Tech\\
  New York, NY 11201, USA}
\email[E. Pulla]{evelyn.pulla28@citytech.cuny.edu}
\author{Xi Sisi Shen*}
\thanks{*Supported in part by NSF grant DMS-2601275.}
\address{Department of Mathematics\\
  CUNY City Tech\\
  New York, NY 11201, USA}
\email[X. S. Shen]{xi.shen15@citytech.cuny.edu}

\newtheorem{proposition}{Proposition}
\newtheorem{thm}{Theorem}

\newtheorem{lemma}{Lemma}

\numberwithin{equation}{section}

\title[Four-arc regions for a strip density]{A comparison theorem for the planar strip isoperimetric problem}
\date{}
\begin{document}
\begin{abstract}
We consider the isoperimetric problem with planar density equal to 1 on a horizontal strip and to a constant $\lambda>1$ outside the strip. We resolve a conjecture made by Ca\~nete-Miranda Jr-Vittone by showing that every four-arc region admits a three-arc competitor with the same weighted area and strictly smaller weighted perimeter. The argument applies to every $\lambda>1$ and gives an explicit positive lower bound for the perimeter difference. 
\end{abstract}
\maketitle

\section{Introduction}
The isoperimetric problem of enclosing a given amount of area with the shortest boundary is one of the oldest questions in geometry, but the tools to prove it rigorously are rather recent. De Giorgi's theory of sets of finite perimeter, developed in the 1950s gave the first proof that the round ball minimizes perimeter for its volume in $\mathbb{R}^n$ \cite{DG1,DG2}. That same machinery turned out to generalize far beyond Euclidean space, by weighting the usual volume and surface area by integrating a density function led to the theory of manifolds with density. The best-known example is Gauss space, where the isoperimetric sets are half-spaces \cite{Borell}. A systematic isoperimetric theory for general densities on $\mathbb{R}^n$ led to the Log-Convex Density Conjecture of Rosales, Cañete, Bayle and Morgan \cite{RCBM} which states that for a smooth, radial, log-convex density, balls about the origin are always isoperimetric. This conjecture was fully resolved only in 2015, by Chambers \cite{Chambers}. Piecewise constant densities are less well understood. Cañete, Miranda Jr. and Vittone \cite{CMV} gave the first systematic study of such densities, including the strip density
$$f_\lambda(x,y) = \begin{cases} 1, & |y|\le 1,\\ \lambda, & |y|>1,\end{cases} \qquad \lambda > 1.$$
This is the simplest density with a genuine jump discontinuity: a minimizer must trade curvature against a change in the cost of boundary length as it crosses a fixed line and this gives rise to a Snell refraction law for the crossing angles as was shown in \cite{CMV}.

For a bounded region $E\subset\R^2$ with piecewise smooth boundary, write
\[
 \mathcal{A}_\lambda(E)=\int_E f_\lambda\,dx\,dy,
 \qquad \mathcal{P}_\lambda(E)=\int_{\partial E}f_\lambda\,d\mathcal{H}^1.
\]
The values of the density on the two interfaces are therefore equal to one. All regions used below have piecewise smooth Jordan boundaries.

Ca\~nete, Miranda Jr.\ and Vittone~\cite{CMV} describe four geometric families arising in the strip-density problem and establish that these are the only possible isoperimetric candidates. They prove in Proposition 3.9 of \cite{CMV} that these candidates must have vertical reflective symmetry. The four candidates are:
\begin{enumerate}[(i)]
    \item balls fully contained in the strip
    \item pieces of the strip bounded on the left and right sides by semicircles and bounded from the top and bottom by $\{|y|=1\}$
    \item sets bounded by three circular arcs with the same radius and a straight line segment (possibly a single point) contained in $\{y=-1\}$ on the bottom. The top arc is contained in $\{y>1\}$ and the other two are contained in the strip meeting the top arc at an angle determined by Snell's refraction law and meeting the bottom straight line segment tangentially.
    \item sets bounded by four circular arcs with the same radius with two of them contained within the strip, a top arc in $\{y>1\}$ and a bottom arc in $\{y<1\},$ meeting at angles determined by Snell's law.
\end{enumerate}

\begin{figure}[htbp]
    \centering 
    \includegraphics[width=0.8\textwidth]{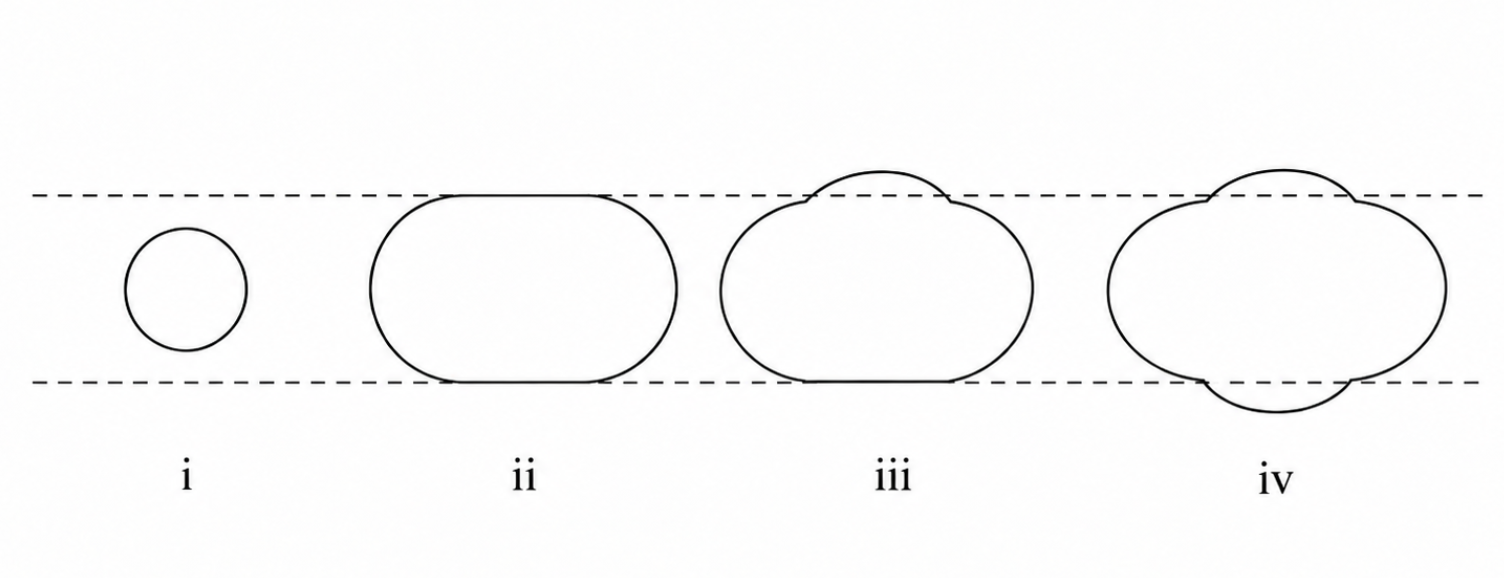} 
    \caption{The four isoperimetric candidates.}
    \label{fig:my_image} 
\end{figure}

In Conjecture 3.12 of \cite{CMV}, it was conjectured whether the four-arc candidate could ever be isoperimetric, and was shown for values of $\lambda>\pi/4$, that it cannot be. In this paper, we extend this result to full generality for all $\lambda>1$. We consider the symmetric four-arc family, consisting of two circular arcs in the strip and two congruent exterior caps. We denote its the four-arc region with curvature $h$ by $C_h$. Our main theorem is as follows:

\begin{thm}\label{thm:main}
For every $\lambda>1$ and $0<h\leq1$, there is a three-arc region $E_s$, with $0<s<h$, such that
\[
 \mathcal{A}_\lambda(E_s)=\mathcal{A}_\lambda(C_h)
\]
and
\begin{equation}\label{eq:quantitative}
 \mathcal{P}_\lambda(C_h)-\mathcal{P}_\lambda(E_s)>
 \frac{\gamma_\lambda}{h}>0,
 \qquad
 \gamma_\lambda:=\lambda\arccos(1/\lambda)-\sqrt{1-\lambda^{-2}}.
\end{equation}
Consequently, $C_h$ cannot minimize weighted perimeter at fixed weighted area.
\end{thm}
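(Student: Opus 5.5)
The plan is to make both families completely explicit, reduce the comparison to one-variable calculus in the curvature parameter, and produce the constant $\gamma_\lambda$ as the weighted length defect of a single exterior cap.

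\textbf{Step 1: parametrize both families.} For the four-arc region $C_h$, all arcs have radius $r=1/h$. The two interior arcs are symmetric about $y=0$, and the caps are congruent arcs of circles centered on the $y$-axis, with chords on $\{y=\pm1\}$. Snell's law at $\{y=\pm1\}$, in the form $\cos\theta_{\mathrm{in}}=\lambda\cos\theta_{\mathrm{out}}$, fixes the cap half-angle $\alpha$ and the interior crossing angle as explicit functions of $h$ and $\lambda$. The same data for the three-arc region $E_s$, of radius $1/s$, are fixed by Snell's law on $\{y=1\}$ together with tangency to the segment on $\{y=-1\}$. From these I would write closed formulas for $\mathcal A_\lambda$ and $\mathcal P_\lambda$ as sums of circular-sector and triangle terms: each piece is $\lambda\times$(arc length or area) above $y=1$ or below $y=-1$, and $1\times$ inside the strip. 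This yields functions $A_4(h),P_4(h),A_3(s),P_3(s)$.

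\textbf{Step 2: find the matching $s$ with $0<s<h$.} I would show that $A_3$ is continuous and strictly decreasing in $s$, with $A_3(s)\to\infty$ as $s\to0$. I would then check directly that $A_3(h)<A_4(h)$ for $h\le1$. This should hold because $E_h$ is, roughly, $C_h$ with its bottom cap and the lower part of its interior arcs replaced by a flat segment, which loses area. The intermediate value theorem then gives $s\in(0,h)$ with $\mathcal A_\lambda(E_s)=\mathcal A_\lambda(C_h)$.

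\textbf{Step 3: compare perimeters through the first-variation identity.} Both families consist of stationary regions: constant curvature on every arc and Snell's law at every crossing. So along each family $dP=\kappa\,dA$, i.e.\ $P_4'(h)=h\,A_4'(h)$ and $P_3'(s)=s\,A_3'(s)$; I would verify this from the Step 1 formulas as a check. Viewing each perimeter as a function of the enclosed weighted area, both graphs have slope equal to the curvature. At a common area $A$ the three-arc curvature $s$ is smaller than the four-arc curvature $h$, since $s<h$ from Step 2, and this monotonicity persists along the whole range of areas. Hence the perimeter gap is nondecreasing as the area decreases toward the degenerate endpoints. It is then bounded below by its value in the limiting configuration where one can compare directly. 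In that configuration the difference should be exactly the cost of one exterior cap against its chord:
\[
\lambda\cdot 2r\alpha-2r\sin\alpha,\qquad \cos\alpha=1/\lambda .
\]
Keeping only half of it gives $r(\lambda\alpha-\sin\alpha)=\gamma_\lambda/h$.

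Equivalently, one can try the direct estimate
\[
P_4(h)-P_3(s)=\int_s^h\bigl(\cdots\bigr)+\bigl[P_4(h)-P_3(h)\bigr],
\]
and bound the explicit term $P_4(h)-P_3(h)$ below by the cap defect.

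\textbf{Main obstacle.} I expect the hard step to be establishing the sign conditions uniformly in $\lambda>1$ and $h\le1$: that $A_3(h)<A_4(h)$, and that the explicit defect $P_4(h)-P_3(h)-(\text{integral correction})$ strictly exceeds $\gamma_\lambda/h$. The trigonometric expressions mix $\arccos(h\cdot)$ terms coming from Snell's law, which previously forced the restriction $\lambda>\pi/4$. I would first try to isolate the bottom-cap contribution exactly: it equals $\lambda\,\mathrm{arc}-\mathrm{chord}$ evaluated at the Snell angle, and $\lambda\alpha-\sin\alpha>0$ since $\lambda\alpha>\alpha>\sin\alpha$. I would then show that all remaining terms have a definite sign, using $0<s<h\le1$ and the concavity of $\arcsin$.
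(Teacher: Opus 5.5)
There is a genuine gap in Step 3, and Step 3 is where the theorem is decided.

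\textbf{The perimeter-versus-area picture is not available.} Neither area function is monotone in the curvature. Using $B'(z)+D(z)=zD'(z)$ one gets
\[
 A_3'(s)=\frac{8s^2D'(2s-1)-2\bigl(B(2s-1)+\pi/2+(2s+1)D(2s-1)\bigr)}{s^3},
 \qquad
 A_4'(h)=\frac{4\bigl(h^2D'(h)-B(h)-hD(h)\bigr)}{h^3}.
\]
Since $D'(z)\sim(1-z^2)^{-1/2}$ as $z\uparrow1$, both derivatives tend to $+\infty$ as the curvature tends to $1$. So your Step 2 claim that $A_3$ is strictly decreasing is false, and both families fold back in the $(A,P)$-plane. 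Existence of an equal-area $s$ still follows from continuity alone. This is why the paper takes the \emph{largest} root $r$ of $A_3=A_4(h)$ in $[\varepsilon,h]$, which guarantees $A_3<A_4(h)$ on $(r,h]$.

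\textbf{Even granting monotonicity, the endpoint argument fails.}
\begin{enumerate}[(i)]
 \item If the gap has area-derivative $h-s>0$, it \emph{increases} with area; you state the opposite.
 \item Bounding the gap by its value at an endpoint gives an $h$-independent constant, whereas $\gamma_\lambda/h\to\infty$ as $h\to0$.
 \item The endpoint value is misidentified. The cap-versus-chord defect $2\gamma_\lambda$ equals $P_4(1)-P_3(1)$, a \emph{same-curvature} difference: the limiting three-arc region at $s=1$ is exactly $C_1$ with its lower cap replaced by its chord. The equal-area competitor of $C_1$ has $s<1$.
 \item For $h<1$ the caps of $C_h$ satisfy $\cos\alpha=h/\lambda$. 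So $(\lambda\alpha-\sin\alpha)/h$ with $\cos\alpha=1/\lambda$ is not the defect of any piece of $C_h$, and nothing in your outline justifies ``keeping only half.''
\end{enumerate}

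\textbf{The direct estimate hits the same wall.} The correction $\int_s^h tA_3'(t)\,dt=P_3(h)-P_3(s)$ is in general negative; it is whenever $A_3$ decreases on $[s,h]$. So it must be bounded, not dropped, and a lower bound on $P_4(h)-P_3(h)$ alone does not suffice.

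\textbf{The missing idea is a tangent-line bound.} Let $V=A_4(h)$ and $Q(t)=P_3(t)+t\bigl(V-A_3(t)\bigr)$. The identity $P_3'=tA_3'$ gives $Q'=V-A_3>0$ on $(r,h)$. Hence $P_3(r)<P_3(h)+h\bigl(A_4(h)-A_3(h)\bigr)$, and so
\[
 P_4(h)-P_3(r)>\bigl(P_4(h)-hA_4(h)\bigr)-\bigl(P_3(h)-hA_3(h)\bigr).
\]
The ``half'' you discard is exactly $h\bigl(A_4(h)-A_3(h)\bigr)$, which equals $\gamma_\lambda$ at $h=1$.

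\textbf{The two same-curvature inequalities come from one function.} Set $S(z)=B(z)-zD(z)$, so that $S'=-2D$, and let $J(u)=2S(u)-S(2u-1)-\pi/2$. Then the right-hand side above equals $J(h)/h$, and $h^2\bigl(A_4(h)-A_3(h)\bigr)=J(h)-hJ'(h)$. The derivative $J'(u)=4\bigl(D(2u-1)-D(u)\bigr)$ changes sign only at $u=1/3$, and $J''<0$ on $(0,1/3)$. Together with $J(0)=J(1)=\gamma_\lambda$, this gives $J\ge\gamma_\lambda$ and $J-uJ'\ge\gamma_\lambda$ for every $\lambda>1$. That is exactly what your ``main obstacle'' paragraph leaves open, and concavity of $\arcsin$ or term-by-term sign checks do not supply it.
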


The theorem concerns the explicit symmetric family above. Its proof does not use a classification theorem for arbitrary minimizers. The accompanying source repository~\cite{Repository} also contains an all-density exclusion for this model. Our purpose here is to give a direct proof of the comparison, with the geometric realization and endpoint arguments included.

The paper is organized as follows: in Section \ref{sec:geometry} we give explicit expressions for the functions that govern the arcs of the three- and four-arc families and show the area and perimeter functions in terms of curvature, in Section \ref{sec:comparison}, we prove that for every 4-arc region, there exists a 3-arc region with equal area and strictly smaller perimeter and establish the explicit perimeter reduction achieved.

\section*{Acknowledgments}
We would like to thank Frank Morgan and David Thompson for sharing with us their detailed report on ``Isometric Problems in Sectors with Density" which provided us with useful insight into proof techniques for related isoperimetric problems. This project was completed as part of an undergraduate research program at CUNY City Tech during the summer of 2026, and the first three authors were given summer support from NSF grant DMS-2601275.

\section*{Declaration of AI Use}
A bespoke agentic orchestration program, \href{https://github.com/RosaRojacr/agentic-lean-math-assistant}{Agentic Lean Math Assistant}, was developed to investigate this and related problems, with GPT-6-Astra as the coordinating model and GPT-5.6-Sol, GPT-5.6-Terra, and GPT-5.6-Luna as subagents. The program assisted in discovering intermediate results, developing proof strategies, and formalizing the resulting arguments in Lean. The formalization passed Lean’s kernel checks, and a separate AI-assisted semantic audit found the formal statements consistent with their mathematical exposition. The authors independently checked the mathematical arguments presented here and wrote the final manuscript in their own words. The accompanying source repository \cite{Repository} contains an all-density exclusion for this model. 

\section{Geometric formulas}\label{sec:geometry}
In this section, we will introduce the notation and formulas we will use for our comparison theorem of the three- and four-arc regions needed for the proof in our main theorem.

Fix $\lambda>1$. For $-1\leq z\leq1$, define
\begin{equation}\label{eq:BD}
 B(z)=\lambda\arccos(z/\lambda)+\arcsin z,
 \qquad
 D(z)=\sqrt{1-z^2/\lambda^2}-\sqrt{1-z^2}.
\end{equation}
For $0<s\leq1$, let $q=2s-1$ and set
\begin{align}
 A_3(s)&=\frac{B(q)+\pi/2+(2s+1)D(q)}{s^2},
 &P_3(s)&=\frac{2\bigl(B(q)+\pi/2+D(q)\bigr)}{s},\label{eq:three}\\
 A_4(s)&=\frac{2\bigl(B(s)+sD(s)\bigr)}{s^2},
 &P_4(s)&=\frac{4B(s)}{s}.\label{eq:four}
\end{align}
These quantities are equivalent to those in \cite{CMV} through the use of the trigonometric identity $\arcsin(2h-1)-\frac{\pi}{2}=2\arcsin(\sqrt{h})$. The values $A_3(1)$ and $P_3(1)$ will be used only as continuous endpoint values of the formulas. The competitors $E_s$ constructed in the proof always have $s<1$.

We first verify that these are the weighted areas and perimeters of the relevant regions. For $0<h\leq1$, let
\[
 R=h^{-1},\qquad \alpha=\arccos(h/\lambda),\qquad d=D(h)/h.
\]
Define the central region and upper cap by
\begin{align*}
 K_h&=\{(x,y): -1\leq y\leq1,\ |x|\leq d+\sqrt{R^2-y^2}\},\\
 H_h&=\{(x,y):y\geq1,\ x^2+(y-(1-R\cos\alpha))^2\leq R^2\}.
\end{align*}
If $\sigma(x,y)=(x,-y)$, then
\[
 C_h=K_h\cup H_h\cup\sigma(H_h).
\]
At $y=\pm1$, the half-width of $K_h$ is $\sin\alpha/h$, so the caps and central region meet along their full chords. Interior points of these chords are interior points of the union and contribute no boundary length. The exposed boundary consists of the two strip arcs and two exterior arcs. Hence
\[
 \mathcal{P}(C_h)=\frac{4\arcsin h}{h}+\frac{4\lambda\alpha}{h}=P_4(h).
\]
The central area and the area of one exterior cap are
\[
 |K_h|=\frac{4D(h)}h+\frac{2\sqrt{1-h^2}}h+
 \frac{2\arcsin h}{h^2},
 \qquad
 |H_h|=\frac{\alpha-\sin\alpha\cos\alpha}{h^2}.
\]
It follows that
\begin{equation}\label{eq:four-realized}
 \mathcal{A}(C_h)=A_4(h).
\end{equation}
These computations remain valid when $h=1$.

For the three-arc family, let $0<s<1$ and set
\[
 q=2s-1,\qquad R=s^{-1},\qquad
 \alpha=\arccos(q/\lambda),\qquad b=D(q)/s.
\]
Since $D(q)\geq0$, we have $b\geq0$. Define
\begin{align*}
 K_s^{(3)}&=\{(x,y):-1\leq y\leq1,\
 |x|\leq b+\sqrt{R^2-(y-(R-1))^2}\},\\
 H_s^{(3)}&=\{(x,y):y\geq1,\
 x^2+(y-(1-R\cos\alpha))^2\leq R^2\},
\end{align*}
and put $E_s=K_s^{(3)}\cup H_s^{(3)}$. The radicand in $K_s^{(3)}$ is nonnegative throughout the strip. Its upper half-width is
\[
 b+\frac{\sqrt{1-q^2}}s=\frac{\sin\alpha}s,
\]
which agrees with the cap half-chord. Its lower boundary is the exposed segment of length $2b$ on $y=-1$.

The two strip arcs have total length $2(\arcsin q+\pi/2)/s$, and the exterior arc has length $2\alpha/s$. Therefore
\[
 \mathcal{P}(E_s)=\frac{2\lambda\alpha}{s}
  +\frac{2(\arcsin q+\pi/2)}s+2b=P_3(s).
\]
Direct integration gives
\[
 |K_s^{(3)}|=\frac{4D(q)}s+
 \frac{\arcsin q+\pi/2+q\sqrt{1-q^2}}{s^2},
 \qquad
 \lambda|H_s^{(3)}|=\frac{\lambda\alpha-q\sin\alpha}{s^2}.
\]
Adding these terms yields
\begin{equation}\label{eq:three-realized}
 \mathcal{A}(E_s)=A_3(s),\qquad \mathcal{P}(E_s)=P_3(s).
\end{equation}

When $s<1/2$, the upper cap has $\alpha>\pi/2$ and is a major circular segment. It is still given by the intersection of the disk with $y\geq1$, so the construction remains valid. When $s=1/2$, the lower segment reduces to one point and $E_s$ is the disk of radius two centered at $(0,1)$. Thus every parameter $0<s<1$ gives a valid competitor. All the regions above are bounded, and the finitely many arc junctions have zero $\mathcal{H}^1$-measure.

\section{Two comparisons at the same curvature}\label{sec:comparison}
The proof will use two inequalities: $A_3(h)<A_4(h)$ and a strict comparison of $P_i(h)-hA_i(h)$ for $i=3,4$. Both follow from the same auxiliary function.

For $-1\leq z\leq1$, set
\[
 S(z)=B(z)-zD(z),
\]
and, for $0\leq u\leq1$, define
\begin{equation}\label{eq:J}
 J(u)=2S(u)-S(2u-1)-\frac\pi2.
\end{equation}

\begin{lemma}\label{lem:J}
For $0<u\leq1$, 
\begin{equation}\label{eq:J-bounds}
 J(u)\geq\gamma_\lambda>0,
 \qquad J(u)-uJ'(u)\geq\gamma_\lambda.
\end{equation}
\end{lemma}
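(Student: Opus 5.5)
The plan rests on one identity: $S'(z)=-2D(z)$. With this, both $J$ and $J-uJ'$ become explicit enough to control by monotonicity plus endpoint evaluation. To verify the identity, write $a=\sqrt{1-z^2/\lambda^2}$ and $c=\sqrt{1-z^2}$, so that $B'=-1/a+1/c$ and $D'=z\bigl(1/c-1/(\lambda^2a)\bigr)$. Then
\[
 S'=B'-D-zD'=\Bigl(-\tfrac1a+\tfrac{z^2}{\lambda^2a}\Bigr)+\Bigl(\tfrac1c-\tfrac{z^2}{c}\Bigr)-a+c=-2a+2c=-2D(z).
\]
Hence $J'(u)=4\bigl(D(2u-1)-D(u)\bigr)$. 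Note that $D$ is even, with $D'(z)>0$ for $z>0$ because $c<a$ and $\lambda>1$. So $D$ is increasing in $|z|$.

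\emph{First bound.} We have $J'(u)\le0$ iff $|2u-1|\le u$, i.e. iff $u\in[1/3,1]$, and $J'>0$ on $(0,1/3)$. So $J$ increases on $[0,1/3]$ and decreases on $[1/3,1]$, and its infimum on $(0,1]$ is $\min(J(0),J(1))$; $J$ extends continuously to $u=0$. The endpoint values are direct computations:
\[
 J(1)=S(1)-\tfrac\pi2=\lambda\arccos(1/\lambda)-\sqrt{1-\lambda^{-2}}=\gamma_\lambda .
\]
For $J(0)=2S(0)-S(-1)-\tfrac\pi2$, use $S(0)=\lambda\pi/2$ and $S(-1)=\lambda(\pi-\arccos(1/\lambda))-\pi/2+\sqrt{1-\lambda^{-2}}$; these give $J(0)=\gamma_\lambda$ as well. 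Thus $J\ge\gamma_\lambda$ on $(0,1]$. For positivity, $\gamma_1=0$ and
\[
 \frac{d\gamma_\lambda}{d\lambda}=\arccos(1/\lambda)+\lambda^{-1}\sqrt{1-\lambda^{-2}}>0 \quad\text{for }\lambda>1,
\]
so $\gamma_\lambda>0$.

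\emph{Second bound.} Set $K(u)=J(u)-uJ'(u)$, so $K'(u)=-uJ''(u)$. At the endpoints, $K(1)=J(1)-J'(1)=\gamma_\lambda$ because $J'(1)=4(D(1)-D(1))=0$. Also $K(u)\to J(0)=\gamma_\lambda$ as $u\to0$, since $J'$ stays bounded. As before, it suffices to show $K$ is unimodal, i.e. that $J''=4\varphi$ with $\varphi(u)=2D'(2u-1)-D'(u)$ changes sign exactly once on $(0,1)$, from negative to positive. On $(0,1/2]$ we have $2u-1\le0$, so $D'(2u-1)\le0<D'(u)$ and hence $\varphi<0$; there $K$ increases. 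Near $u=1$, $D'(2u-1)\to+\infty$ while $D'(1)$ is also infinite, so the sign needs a closer look. Since $2D'(2u-1)$ blows up with the same $(1-z)^{-1/2}$ rate but twice the coefficient, the comparison gives $\varphi\to+\infty$.

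\emph{Main obstacle.} The delicate step is proving that $\varphi$ has only one zero on $(1/2,1)$. I would first try to show $\varphi$ is increasing there. The term $2D'(2u-1)$ has derivative $4D''(2u-1)$, and I would try to dominate $D''(u)$ by it, using $D''(z)=c^{-3}-\lambda^{-2}a^{-3}>0$ together with the monotonicity of $c^{-3}$ in $|z|$. If this direct comparison fails for some $\lambda$, the fallback is to write $\varphi(u)=0$ as an equation in $u$ and reduce it, after clearing radicals, to a polynomial condition with at most one root in $(1/2,1)$. Once unimodality holds, $K\ge\min(K(0^+),K(1))=\gamma_\lambda$, which completes \eqref{eq:J-bounds}.
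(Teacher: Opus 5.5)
\emph{Verdict: the first inequality and the positivity of $\gamma_\lambda$ are correct, but the second inequality is not established as written.}

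\textbf{What is correct.} Your derivation of $S'=-2D$, the formula $J'(u)=4\bigl(D(2u-1)-D(u)\bigr)$, and the sign pattern of $J'$ are right. So are the endpoint values $J(0)=J(1)=\gamma_\lambda$, and with them the first inequality. Your positivity argument ($\gamma_1=0$ and $d\gamma_\lambda/d\lambda>0$) is a valid substitute for the paper's $\gamma_\lambda=\lambda\theta-\sin\theta$ with $\theta=\arccos(1/\lambda)$.

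\textbf{The gap.} You reduce the second inequality to showing that $J''$ changes sign exactly once on $(0,1)$. You then leave the sign of $\varphi(u)=2D'(2u-1)-D'(u)$ on $(1/2,1)$ open, so nothing controls $K=J-uJ'$ there. Your primary route, showing $\varphi$ is increasing on $(1/2,1)$, actually fails for $\lambda$ near $1$. With $D''(z)=(1-z^2)^{-3/2}-\lambda^{-2}(1-z^2/\lambda^2)^{-3/2}$ and $\lambda=1.01$, one gets $\varphi'(0.9)=4D''(0.8)-D''(0.9)\approx 1.28-1.59<0$. This persists for all $\lambda$ close to $1$. For fixed $|z|<1$, $D''(z)/(1-\lambda^{-2})\to F(z):=(1-z^2)^{-5/2}(1+z^2/2)$ as $\lambda\downarrow1$, and $4F(0.8)-F(0.9)\approx 67.9-89.3<0$. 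The polynomial fallback is only a hope.

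\textbf{The missing observation.} No information about $J''$ on $[1/3,1]$ is needed. You already showed $J'\le 0$ on $[1/3,1]$, so there $K(u)=J(u)-uJ'(u)\ge J(u)\ge\gamma_\lambda$ directly from the first inequality. On $(0,1/3)$ you already have $J''<0$, since your argument on $(0,1/2]$ covers this interval. Hence $K'=-uJ''>0$ there, and $K(u)\ge\lim_{v\downarrow 0}K(v)=J(0)=\gamma_\lambda$, because $J'$ stays bounded near $0$. This is exactly how the paper closes the proof.

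It also makes your blow-up analysis near $u=1$ unnecessary. That analysis is slightly off in any case: in powers of $(1-u)^{-1/2}$, the leading coefficients of $2D'(2u-1)$ and $D'(u)$ are $1$ and $1/\sqrt2$, so their ratio is $\sqrt2$, not $2$. The conclusion $\varphi\to+\infty$ survives.
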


\begin{proof}
For $-1<z<1$, differentiation gives
\[
 D'(z)=\frac{z}{\sqrt{1-z^2}}
       -\frac{z}{\lambda\sqrt{\lambda^2-z^2}},
 \qquad
 B'(z)+D(z)=zD'(z).
\]
It follows that
\begin{equation}\label{eq:Sprime}
 S'(z)=-2D(z).
\end{equation}
The function $D$ is even, and $D'(z)>0$ for $0<z<1$. Indeed,
\[
 \lambda\sqrt{\lambda^2-z^2}>\sqrt{1-z^2}
 \qquad (|z|<1).
\]
In particular, $D'(z)<0$ when $-1<z<0$.

The trigonometric identities in~\eqref{eq:BD} give
\[
 S(-z)=\lambda\pi-S(z),\qquad S(0)=\frac{\lambda\pi}{2}.
\]
Consequently,
\begin{equation}\label{eq:Jends}
 J(0)=J(1)=S(1)-\frac\pi2
 =\lambda\arccos(1/\lambda)-\sqrt{1-\lambda^{-2}}
 =\gamma_\lambda.
\end{equation}
To see that this number is positive, put $\theta=\arccos(1/\lambda)\in(0,\pi/2)$. Then
\[
 \gamma_\lambda=\lambda\theta-\sin\theta>0,
\]
since $\sin\theta<\theta<\lambda\theta$.

Using~\eqref{eq:Sprime}, we compute
\begin{equation}\label{eq:Jprime}
 J'(u)=4\bigl(D(2u-1)-D(u)\bigr),\qquad 0<u<1.
\end{equation}
Since $D$ is even and strictly increasing on $[0,1]$, this derivative is positive for $0<u<1/3$ and negative for $1/3<u<1$. Together with~\eqref{eq:Jends}, this proves the first inequality in~\eqref{eq:J-bounds}.

Let $H(u)=J(u)-uJ'(u)$. For $1/3\leq u\leq1$, the sign of $J'$ gives
\[
 H(u)\geq J(u)\geq\gamma_\lambda.
\]
For $0<u<1/3$, we have $2u-1<0$, and hence
\[
 J''(u)=8D'(2u-1)-4D'(u)<0.
\]
Therefore $H'(u)=-uJ''(u)>0$ on this interval. Formula~\eqref{eq:Jprime} extends continuously to $u=0$ with a finite value, so
\[
 \lim_{u\downarrow0}H(u)=J(0)=\gamma_\lambda.
\]
This proves the second inequality. All endpoint statements follow by continuity; no derivative of a square root is taken at $u=1$.
\end{proof}

\begin{proposition}\label{prop:comparison}
For every $\lambda>1$ and $0<h\leq1$,
\begin{align}
 A_4(h)-A_3(h)&\geq\frac{\gamma_\lambda}{h^2}>0,\label{eq:area-comparison}\\
 \bigl(P_4(h)-hA_4(h)\bigr)
 -\bigl(P_3(h)-hA_3(h)\bigr)
 &\geq\frac{\gamma_\lambda}{h}>0.\label{eq:support-comparison}
\end{align}
\end{proposition}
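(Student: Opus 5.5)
The plan is to reduce both inequalities to the two bounds of Lemma~\ref{lem:J}, evaluated at $u=h$. The key step is to write $h^2A_i(h)$ and $hP_i(h)$ in terms of $S(z)=B(z)-zD(z)$ and $D$, with $q=2h-1$. After this the comparisons become exactly $J(h)$ and $J(h)-hJ'(h)$. No new estimates should be needed, only bookkeeping with the definitions \eqref{eq:three}, \eqref{eq:four} and \eqref{eq:J}, together with formula \eqref{eq:Jprime} for $J'$.

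For \eqref{eq:support-comparison} I will compute each side after multiplying by $h$. From \eqref{eq:four},
\[
 h\bigl(P_4(h)-hA_4(h)\bigr)=4B(h)-2B(h)-2hD(h)=2S(h).
\]
From \eqref{eq:three},
\[
 h\bigl(P_3(h)-hA_3(h)\bigr)=B(q)+\frac\pi2+(1-2h)D(q)=S(q)+\frac\pi2,
\]
using $1-2h=-q$. Subtracting these gives $2S(h)-S(2h-1)-\pi/2=J(h)$. The first inequality of Lemma~\ref{lem:J} then yields $J(h)\ge\gamma_\lambda$, and dividing by $h>0$ gives \eqref{eq:support-comparison}.

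For \eqref{eq:area-comparison} I will compare $h^2\bigl(A_4(h)-A_3(h)\bigr)$ with $J(h)$. We have
\[
 h^2\bigl(A_4(h)-A_3(h)\bigr)=2B(h)+2hD(h)-B(q)-\frac\pi2-(2h+1)D(q),
\]
while
\[
 J(h)=2B(h)-2hD(h)-B(q)+qD(q)-\frac\pi2.
\]
Subtracting, the difference is
\[
 4hD(h)-(2h+1+q)D(q)=4h\bigl(D(h)-D(2h-1)\bigr).
\]
By \eqref{eq:Jprime} this equals $-hJ'(h)$, so $h^2(A_4-A_3)(h)=J(h)-hJ'(h)$. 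The second inequality of Lemma~\ref{lem:J} bounds this below by $\gamma_\lambda$, and dividing by $h^2$ gives \eqref{eq:area-comparison}.

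I do not expect a real obstacle here; the only care points are the endpoint cases. At $h=1$, formula \eqref{eq:Jprime} is used only as the algebraic expression $4(D(2u-1)-D(u))$, which is continuous on $[0,1]$. The bound for $J-uJ'$ at $u=1$ holds by continuity, as already noted in the lemma. The main risk is sign and bookkeeping slips in the $D(q)$ coefficients, which I will recheck term by term.
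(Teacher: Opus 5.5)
Your proposal is correct and follows the paper's proof essentially verbatim: you reduce the support comparison to $h\bigl(P_4-hA_4\bigr)-h\bigl(P_3-hA_3\bigr)=2S(h)-S(2h-1)-\pi/2=J(h)$ and the area comparison to $h^2(A_4-A_3)(h)=J(h)-hJ'(h)$ via \eqref{eq:Jprime}, then invoke Lemma~\ref{lem:J}. Your coefficient bookkeeping checks out, and your handling of $h=1$ through the continuous expression $4\bigl(D(2u-1)-D(u)\bigr)$ matches the paper's continuity remark.
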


\begin{proof}
Let $q=2h-1$. Substituting~\eqref{eq:three}--\eqref{eq:four} gives
\begin{align*}
 h\bigl(P_4(h)-hA_4(h)\bigr)&=2S(h),\\
 h\bigl(P_3(h)-hA_3(h)\bigr)&=S(q)+\pi/2.
\end{align*}
Thus the difference of these two expressions is $J(h)$. A second direct calculation, using~\eqref{eq:Jprime}, yields
\begin{align*}
 h^2\bigl(A_4(h)-A_3(h)\bigr)
 &=2B(h)-B(q)-\pi/2+2hD(h)-(2h+1)D(q)\\
 &=J(h)-hJ'(h).
\end{align*}
The two claims now follow from Lemma~\ref{lem:J}.
\end{proof}

\section{The equal-area comparison}
We next record the variational identity needed to pass from the same-curvature inequalities to an equal-area competitor.

\begin{lemma}\label{lem:variation}
For $0<s<1,$ we have that
\begin{equation}
 P_3'(s)=sA_3'(s),
\end{equation}
and $A_3(s)\longrightarrow+\infty$ as $s\downarrow0$.
\end{lemma}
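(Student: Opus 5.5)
The plan is a direct computation. Both identities reduce to the derivative relation $B'(z)+D(z)=zD'(z)$ obtained in the proof of Lemma~\ref{lem:J}. Geometrically, the first identity is the statement that the curvature $s$ of the arcs of $E_s$ is the Lagrange multiplier for area. I would nevertheless verify it algebraically from~\eqref{eq:three}, which avoids any first-variation argument for piecewise smooth boundaries.

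\textbf{Step 1: derivatives of the numerators.} Write
\[
N_A(s)=B(q)+\tfrac{\pi}{2}+(2s+1)D(q),\qquad N_P(s)=2\bigl(B(q)+\tfrac{\pi}{2}+D(q)\bigr),\qquad q=2s-1,
\]
so that $A_3=N_A/s^2$ and $P_3=N_P/s$. For $0<s<1$ we have $-1<q<1$, so $B$ and $D$ are smooth at $q$ and we may differentiate. Using $dq/ds=2$ and substituting $B'(q)=qD'(q)-D(q)$, I expect
\[
N_A'(s)=2\bigl(qD'(q)-D(q)\bigr)+2D(q)+2(2s+1)D'(q)=8s\,D'(q).
\]
Similarly, using $q+1=2s$, I expect
\[
N_P'(s)=4\bigl((q+1)D'(q)-D(q)\bigr)=8sD'(q)-4D(q).
\]

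\textbf{Step 2: the identity.} We have
\[
P_3'=\frac{N_P'}{s}-\frac{N_P}{s^2},\qquad sA_3'=\frac{N_A'}{s}-\frac{2N_A}{s^2}.
\]
Hence $P_3'=sA_3'$ is equivalent to
\[
s\bigl(N_P'-N_A'\bigr)=N_P-2N_A.
\]
By Step 1 the left side is $-4sD(q)$. The right side is
\[
2D(q)-2(2s+1)D(q)=-4sD(q),
\]
because the $B$ and $\pi$ terms cancel. This proves the first claim. The only real care needed is bookkeeping of the chain-rule factor $2$, which I regard as the main (minor) obstacle.

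\textbf{Step 3: the limit.} As $s\downarrow0$ we have $q\to-1$, and $N_A$ is continuous on $[0,1]$. Its limit is
\[
N_A(0)=B(-1)+\tfrac{\pi}{2}+D(-1)=\lambda\arccos(-1/\lambda)+\sqrt{1-\lambda^{-2}}.
\]
Writing $\arccos(-1/\lambda)=\pi-\arccos(1/\lambda)$, this equals $\lambda(\pi-\theta)+\sin\theta$ with $\theta=\arccos(1/\lambda)\in(0,\pi/2)$, so it is strictly positive. Therefore $A_3(s)=N_A(s)/s^2\sim N_A(0)/s^2\to+\infty$.
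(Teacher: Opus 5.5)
Your proposal is correct and follows the paper's route: a direct computation from \eqref{eq:three} using $B'(q)+D(q)=qD'(q)$. Your bookkeeping checks out, with $N_A'=8sD'(q)$, $N_P'=8sD'(q)-4D(q)$, and both sides of the reduced identity equal to $-4sD(q)$; your Step 3 also supplies the limit argument the paper leaves implicit, correctly showing $N_A(0)=\lambda(\pi-\theta)+\sin\theta>0$.
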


\begin{proof}
This follows from a direct computation and using the fact that $$B'(q)+D(q)=qD'(q).$$
\end{proof}

\begin{proof}[Proof of Theorem~\ref{thm:main}]
Fix $0<h\leq1$ and let $V=A_4(h)$. By~\eqref{eq:area-comparison},
\[
 A_3(h)<V.
\]
Choose $0<\varepsilon<h$ sufficiently small that $A_3(\varepsilon)>V$. The intermediate value theorem shows that
\[
 Z=\{s\in[\varepsilon,h]:A_3(s)=V\}
\]
is nonempty. It is compact, so it has a largest element $r$. The strict inequalities at the endpoints imply
\[
 0<\varepsilon<r<h\leq1.
\]
Moreover,
\begin{equation}\label{eq:last-root}
 A_3(s)<V\qquad\text{for }r<s\leq h.
\end{equation}
Indeed, equality would contradict the choice of $r$. If $A_3(s)>V$ at some such point, continuity and $A_3(h)<V$ would produce an equal-area root to its right, again contradicting maximality.

Define
\[
 Q(s)=P_3(s)+s\bigl(V-A_3(s)\bigr),\qquad r\leq s\leq h.
\]
By Lemma~\ref{lem:variation},
\[
 Q'(s)=V-A_3(s)>0\qquad (r<s<h).
\]
Since $Q$ is continuous on $[r,h]$, it follows that
\[
 P_3(r)=Q(r)<Q(h)
 =P_3(h)+h\bigl(A_4(h)-A_3(h)\bigr).
\]
Combining this inequality with~\eqref{eq:support-comparison} gives
\[
 P_4(h)-P_3(r)>\frac{\gamma_\lambda}{h}>0.
\]
The geometric identities~\eqref{eq:four-realized} and~\eqref{eq:three-realized} now give the desired competitor $E_r$. When $h=1$, the same proof applies because differentiation is used only on $(r,1)$ and $Q$ is continuous at $1$.
\end{proof}


\begin{thebibliography}{9}
\bibitem{CMV}
A. Ca\~nete, M. Miranda Jr.\ and D. Vittone,
\emph{Some isoperimetric problems in planes with density},
J. Geom. Anal. \textbf{20} (2010), 243--290.

\bibitem{DG1} 
E. De Giorgi, \emph{Su una teoria generale della misura (r-1)-dimensionale in uno spazio ad r dimensioni}, Ann. Mat. Pura Appl. (4) \textbf{36} (1954), 191--213. 

\bibitem{DG2}
\emph{E. De Giorgi, Sulla proprietà isoperimetrica dell'ipersfera, nella classe degli insiemi aventi frontiera orientata di misura finita}, Atti Accad. Naz. Lincei Mem. Cl. Sci. Fis. Mat. Nat. Sez. I (8) \textbf{5} (1958), 33–-44.

\bibitem{Borell}
C. Borell, \emph{The Brunn-Minkowski inequality in Gauss space}, Invent. Math. \textbf{30} (1975), no. 2, 207--216. 

\bibitem{RCBM}
C. Rosales, A. Cañete, V. Bayle, F. Morgan, \emph{On the isoperimetric problem in Euclidean space with density}, Calc. Var. Partial Differential Equations \textbf{31} (2008), no. 1, 27--46.

\bibitem{Chambers}
G. R. Chambers, \emph{Proof of the Log-Convex Density Conjecture}, J. Eur. Math. Soc. \textbf{21}, issue 8, 2301--2332

\bibitem{Repository}
\emph{CMV Strip-Density Project}, in the
\href{https://github.com/RosaRojacr/agentic-lean-math-assistant/tree/5e5dcd3d15df41a7c8051dd58473928fbe3ccc0c/projects/cmv-strip-density}{Agentic Lean Math Assistant repository},
snapshot \texttt{5e5dcd3d15df}.
See the modules \href{https://github.com/RosaRojacr/agentic-lean-math-assistant/blob/5e5dcd3d15df41a7c8051dd58473928fbe3ccc0c/projects/cmv-strip-density/proof/SameCurvatureArea.lean}{\texttt{SameCurvatureArea.lean}} and
\href{https://github.com/RosaRojacr/agentic-lean-math-assistant/blob/5e5dcd3d15df41a7c8051dd58473928fbe3ccc0c/projects/cmv-strip-density/proof/UniversalStationaryPair.lean}{\texttt{UniversalStationaryPair.lean}}.
\end{thebibliography}
\end{document}